\newtheorem{thm}{Theorem}
\newtheorem{prop}[thm]{Proposition}
\newtheorem{defn}[thm]{Definition}
\newtheorem{rem}{Remark}
\numberwithin{equation}{section}
\newcommand{\Real}{\mathbb R}
\newcommand{\eps}{\varepsilon}
\def\R{\mathbb R}
\begin{document}

\title[Well-posedness for hyperbolic equations]
{Well-posedness for hyperbolic equations whose coefficients lose regularity\\ at one point}%
\author{Daniele Del Santo}\author{Martino Prizzi}

\address{Daniele Del Santo, Universit\`a di Trieste, Dipartimento di
Matematica e Geoscienze,
Via Valerio 12/1, 34127 Trieste, Italy}%
\email{delsanto@units.it}%

\address{Martino Prizzi, Universit\`a di Trieste, Dipartimento di
Matematica e Geoscienze,
Via Valerio 12/1, 34127 Trieste, Italy}%
\email{mprizzi@units.it}%
\subjclass{35L10, 35A22, 46F12}%
\keywords{Gevrey space, well-posedness, strictly hyperbolic,
modulus of continuity}%

\begin{abstract}
We prove some $C^\infty$ and Gevrey well-posedness results for hyperbolic
equations whose coefficients lose regularity at one point.
\end{abstract}
\maketitle

\section{Introduction}

In this paper we deal with the well-posedness of the Cauchy
problem for a linear hyperbolic operator whose coefficients depend only on
time. Namely, we consider the equation
\begin{equation}\label{eq1}
u_{tt}-\sum_{i,j=1}^n a_{ij}(t)u_{x_ix_j}=0
\end{equation}
in $[0,T]\times\R^n$, with initial data
\begin{equation}\label{eq2}
u(0,x)=u_0(x),\quad u_t(0,x)=u_1(x)
\end{equation}
in $\R^n$. The matrix $(a_{ij})_{i,j}$ is supposed to be real and symmetric. Setting
\begin{equation}\label{eq3}
a(t,\xi):=\sum_{i,j=1}^n a_{ij}(t)\xi_i\xi_j/|\xi|^2,\quad(t,\xi)\in[0,T]
\times(\R^n\setminus \{0\}),
\end{equation}
we assume throughout that $a(\cdot,\xi)\in L^\infty(0,T)$ for all $\xi\in\R^n\setminus\{0\}$.
Moreover, we suppose that the equation (\ref{eq1}) is strictly hyperbolic, i.e.
\begin{equation}\label{eq4}
\Lambda_0\geq a(t,\xi)\geq \lambda_0> 0
\end{equation}
for all $(t,\xi)\in[0,T]\times(\R^n\setminus \{0\})$.

It is a classical result that if the coefficients $a_{ij}(t)$'s
are real integrable functions, then the Cauchy problem (\ref{eq1}), (\ref{eq2})
is well posed in ${\mathcal A}'(\R^n)$, the space of real analytic functionals; moreover, if
the initial data vanish in a ball, then the solution vanishes in a cone, whose
slope depends on the coefficients $a_{ij}(t)$'s (see \cite[Theorems 1 and 3.a]{CDGS}).
On this basis,  various well-posedness results can be proved by mean of the Paley-Wiener theorem (in the version of
\cite[p. 517]{CDGS}, to which we refer here and throughout) and some energy estimates. If the
coefficients $a_{ij}(t)$'s are Lipschitz-continuous then the Cauchy problem
(\ref{eq1}), (\ref{eq2}) is well posed in Sobolev spaces. Relaxing this regulatity assumption, one has that if the $a_{ij}(t)$'s
are Log-Lipschitz-continuous or H\"older-continuous of index $\alpha$, then 
(\ref{eq1}), (\ref{eq2}) is well posed in $C^\infty$ or in the Gevrey space $\gamma^{(s)}$ for 
$s<{1\over 1-\alpha}$ respectively (see \cite[Theorem 3.b,c]{CDGS}). 
Suitable counterexamples show that in each case the regularity assumption on the $a_{ij}(t)$'s is sharp for the well posedness of (\ref{eq1}), (\ref{eq2}) in the corresponding function space.

It is a remarkable fact that in the above mentioned counterexamples the coefficients $a_{ij}(t)$'s are in fact $C^\infty$ for $t\not=0$, and each time the specific regularity fails only at $t=0$. In \cite{CDSK1} the authors showed that a control on the rate of the loss of Lipschitz regularity of the $a_{ij}(t)$'s as $t\to 0$ allows to recover well-posedness of (\ref{eq1}), (\ref{eq2}) in suitable function spaces. To be more specific, if the $a_{ij}(t)$'s are of class $C^1$ in $]0,T]$ and $|a_{ij}'(t)|\leq Ct^{-p}$, then (\ref{eq1}), (\ref{eq2}) is well posed in $C^\infty$ when $p=1$, and in the Gevrey space $\gamma^{(s)}$ for $s<{p\over p-1}$ when $p>1$. Concerning $C^\infty$ well-posedness, it was proved in \cite{CDSR}  that a control on the second derivative of the $a_{ij}$'s as $t\to0$ allows to relax slightly the growth assumption on the first derivative up to $|a_{ij}'(t)|\leq Ct^{-1}|\log t|$. 
In \cite{KR} some of the above results were extended to the case in which the coefficients $a_{ij}$'s depend also on the $x$ variable in $C^{\infty}$ fashion.

In this paper we consider non Lipschitz coefficients whose regularity is ruled by a modulus of continuity $\mu$, with a constant which blows up as $t\to 0$. More precisely, we assume that 
\begin{equation}
|a_{ij}(t+\tau)-a_{ij}(t)|\leq \frac C{\nu(t)}\mu(\tau),\quad 0\leq\tau\leq\tau_0,\quad t,t+\tau\in\,]0,T],
\end{equation}
where $\nu(t)^{-1}$ is possibly non integrable at $t=0$ and where $\mu$-continuity is possibly strictly weaker than Lipschitz continuity. We investigate how the interaction between $\nu$ and $\mu$ affects the well-posedness of (\ref{eq1}), (\ref{eq2}). 

In Section 2 we prove a technical regularization result for the coefficients $a_{ij}$'s.

In Section 3 we consider locally H\"older continuous coefficients satisfying
\begin{equation}|a_{ij}(t+\tau)-a_{ij}(t)|\leq \frac C{t^p}\tau^\alpha,\quad 0\leq\tau,\quad t,t+\tau\in\,]0,T]\end{equation}
with $0<\alpha<1$ and $p>1$, and we obtain well-posedness in the Gevrey space $\gamma^{(\sigma)}$ for $\sigma<\frac{p}{p-\alpha}$, a condition which fits perfectly with the ones of \cite{CDGS} and \cite{CDSK1}.

In Section 4 we consider the problem of $C^\infty$ well-posedness and we identify a precise relation between $\mu$ and $\nu$ which guarantees the latter. In particular we obtain well-posedness for coefficients satisfying 
\begin{equation}|a_{ij}(t+\tau)-a_{ij}(t)|\leq \frac{C}{t|\log t|}\,\frac{\tau|\log \tau|}{\log|\log\tau|},\quad 0\leq\tau,\quad t,t+\tau\in\,]0,T],\end{equation}
where one can easily see that $\nu(t)^{-1}$ is non integrable and  $\mu$-continuity is strictly weaker than Lipschitz continuity. Also in this situation the results fits with the ones contained in \cite{CDGS} and \cite{CDSK1} and contain them as particular cases.

\section{Approximation}
We begin by recalling the notion of {\it modulus of continuity}.
\begin{defn} Let $\tau_0>0$.
A function $\mu: [0,\, \tau_0]\to [0,\,+\infty[$ is a {\rm modulus of continuity} if it is continuous, concave, strictly increasing and $\mu(0)=0$. 
\end{defn}

Let $\mu$ be a modulus of continuity and let $a\colon [0,T]\to\Bbb R$  be a bounded function. Without loss of generality we can assume that $\tau_0\leq T$. We assume that
\begin{equation}|a(t+\tau)-a(t)|\leq \frac C{\nu(t)}\mu(\tau),\quad 0\leq\tau\leq\tau_0,\quad t,t+\tau\in\,]0,T],\end{equation}
where $\nu\colon\,]0,T]\to\,]0,+\infty[$ is a non-decreasing continuous function such that, for some $\kappa>0$,
\begin{equation}\nu(t/2)\geq \kappa \nu(t),\quad t\in\,]0,T].\label{doubling}\end{equation}

\begin{rem}
Condition (\ref{doubling}) is satisfied whenever $\nu$ is concave. Moreover, it is satisfied by $\nu(t)=t^p$ for every real exponent $p>0$. On the other hand, it is not satisfied if $\nu(t)$ tends to $0$ too fast as $t\to 0$, e.g. by $\nu(t)=e^{-1/t}$.
\end{rem}

Now let $0<\epsilon\leq\tau_0\leq T$ and define 
\begin{equation}
\tilde a_\epsilon(t):=\begin{cases}a(\epsilon)&\text{for $t\leq\epsilon$},\\a(t)&\text{for $\epsilon\leq t\leq T$},\\a(T)&\text{for $T\leq t$}.\end{cases}\label{reg1}
\end{equation}

Let $\rho\in C^\infty(\Bbb R)$ with ${\rm supp}\,\rho\subset[-1,1]$, $\rho(s)\geq 0$, $\int_{\Bbb R}\rho(s)\,ds=1$, set $\rho_\epsilon(s):=\frac1\epsilon\rho(\frac s\epsilon)$, and define
\begin{equation}a_\epsilon(t):=\int_{-\epsilon}^{\epsilon}\rho_\epsilon(s)\tilde a_\epsilon(t-s)\,ds, \quad t\in\Bbb R.\label{reg2}\end{equation}

We have the following

\begin{prop}\label{prop1}
Under the above hypotheses, there exist constants $C'$ and $C''>0$ such that, for $0<\epsilon\leq\tau_0$,
\begin{equation}
|a_\epsilon(t)-\tilde a_\epsilon(t)|\leq {C'}\min\left\{1,\frac1{\nu(t)}\mu(\epsilon)\right\},\quad t\in\,]0,T]\label{appr1}
\end{equation}
and
\begin{equation}
|a_\epsilon'(t)|\leq \frac{C''}\epsilon\min\left\{1,\frac1{\nu(t)}\mu(\epsilon)\right\},\quad t\in\,]0,T].\label{appr2}
\end{equation}
The constants $C'$ and $C''$ depend only on $C$, $\rho$, $\kappa$ and $\|a\|_{\infty}$.
\end{prop}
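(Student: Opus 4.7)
For each of (\ref{appr1}) and (\ref{appr2}) I will prove two separate one-sided bounds and then take their minimum. The first is a trivial boundedness estimate, handling the ``$1$'' in $\min\{1,\mu(\epsilon)/\nu(t)\}$; the second, quantitative bound reduces via the mollifier identity to a pointwise estimate on the oscillation $|\tilde a_\epsilon(t-s)-\tilde a_\epsilon(t)|$ for $|s|\leq\epsilon$.

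For the trivial bounds, $\tilde a_\epsilon$ and $a_\epsilon$ are both bounded by $\|a\|_\infty$, yielding $|a_\epsilon(t)-\tilde a_\epsilon(t)|\leq 2\|a\|_\infty$ for (\ref{appr1}). For (\ref{appr2}), I write $a_\epsilon'(t)=\int\rho_\epsilon'(s)\,\tilde a_\epsilon(t-s)\,ds$ and use $\|\rho_\epsilon'\|_{L^1}\leq 2\|\rho'\|_\infty/\epsilon$. For the quantitative bounds, using $\int\rho_\epsilon=1$ and $\int\rho_\epsilon'=0$, I rewrite
\[
a_\epsilon(t)-\tilde a_\epsilon(t)=\int_{-\epsilon}^{\epsilon}\rho_\epsilon(s)\bigl[\tilde a_\epsilon(t-s)-\tilde a_\epsilon(t)\bigr]\,ds,
\]
and similarly for $a_\epsilon'(t)$ with $\rho_\epsilon'$ in place of $\rho_\epsilon$. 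Hence both bounds will follow from a single pointwise estimate
\[
|\tilde a_\epsilon(t-s)-\tilde a_\epsilon(t)|\leq C'''\,\frac{\mu(\epsilon)}{\nu(t)},\qquad t\in\,]0,T],\ |s|\leq\epsilon,
\]
since $\int_{-\epsilon}^{\epsilon}|\rho_\epsilon|$ and $\epsilon\int_{-\epsilon}^{\epsilon}|\rho_\epsilon'|$ are bounded by constants depending only on $\rho$.

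This oscillation estimate is proved by case analysis on the location of $t$ relative to $\epsilon$. If $t\geq 2\epsilon$, then $t$ and $t-s$ both lie in $[\epsilon,T]$ where $\tilde a_\epsilon=a$; the modulus-of-continuity hypothesis gives an upper bound of the form $C\mu(|s|)/\nu(\min\{t,t-s\})\leq C\mu(\epsilon)/\nu(t-s)$, and since $t-s\geq t/2$ the doubling property (\ref{doubling}) replaces $\nu(t-s)$ by $\kappa\nu(t)$. If $t\leq\epsilon$, then $\tilde a_\epsilon(t)=a(\epsilon)$, and the difference vanishes when $t-s\leq\epsilon$; when $t-s\in\,]\epsilon,2\epsilon]$, the hypothesis gives $|a(t-s)-a(\epsilon)|\leq C\mu(\epsilon)/\nu(\epsilon)\leq C\mu(\epsilon)/\nu(t)$ by the monotonicity of $\nu$. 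The intermediate range $\epsilon<t\leq 2\epsilon$ is handled by the same ideas, further splitting $s$ according to the sign of $t-s-\epsilon$ and again invoking (\ref{doubling}) to bring $\nu(\epsilon)$ or $\nu(t-s)$ down to $\nu(t)$ at the cost of the factor $\kappa$.

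The only mildly delicate point is the bookkeeping near the kink of $\tilde a_\epsilon$ at $t=\epsilon$, where $\tilde a_\epsilon$ transitions from the constant value $a(\epsilon)$ to $a$; away from this transition the estimates are standard mollifier calculations. Systematic use of the doubling condition (\ref{doubling}) is what allows every occurrence of $\nu$ at a point near $t$ to be replaced by $\nu(t)$ modulo the constant $\kappa$, which in turn is what forces the dependence of the final constants $C',C''$ on $\kappa$.
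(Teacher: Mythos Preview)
Your proposal is correct and follows essentially the same approach as the paper: reduce both estimates to the oscillation $|\tilde a_\epsilon(t-s)-\tilde a_\epsilon(t)|$ for $|s|\le\epsilon$ via the mollifier identities, then do the three-case analysis $t\ge 2\epsilon$, $0<t\le\epsilon$, $\epsilon<t\le 2\epsilon$ using monotonicity of $\nu$ and the doubling condition. The only (harmless) slip is the intermediate inequality $C\mu(|s|)/\nu(\min\{t,t-s\})\le C\mu(\epsilon)/\nu(t-s)$, which fails for $s<0$; but in that case $\min\{t,t-s\}=t$ and no doubling is needed, so your final bound $C\mu(\epsilon)/(\kappa\nu(t))$ is still correct.
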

\begin{proof}
We have
\begin{multline*}
|a_\epsilon(t)-\tilde a_\epsilon(t)|=\left|\int_{t-\epsilon}^{t+\epsilon}\rho_\epsilon(t-s)(\tilde a_\epsilon(s)-\tilde a_\epsilon(t))\,ds\right|\\
\leq \int_{t-\epsilon}^{t+\epsilon}\rho_\epsilon(t-s)|\tilde a_\epsilon(s)-\tilde a_\epsilon(t)|\,ds.
\end{multline*}

If $t\geq2\epsilon$, then $t-\epsilon\geq t/2\geq\epsilon$, so $\nu(t-\epsilon)\geq\nu(t/2)\geq\kappa\nu(t)$. Therefore, we have
\begin{multline*}
|a_\epsilon(t)-\tilde a_\epsilon(t)|\leq \int_{t-\epsilon}^{t+\epsilon}\rho_\epsilon(t-s)| a(s)- a(t)|\,ds\\
\leq \int_{t-\epsilon}^{t+\epsilon}\rho_\epsilon(t-s)\frac C{\nu(t-\epsilon)}\mu(|s-t|)\,ds\\
\leq  \int_{t-\epsilon}^{t+\epsilon}\rho_\epsilon(t-s)\frac C{\nu(t/2)}\mu(|s-t|)\,ds\\
\leq  \int_{t-\epsilon}^{t+\epsilon}\rho_\epsilon(t-s)\frac{C/\kappa}{\nu(t)}\mu(|s-t|)\,ds\leq \frac{C/\kappa}{\nu(t)}\mu(\epsilon).
\end{multline*}
If $0<t\leq\epsilon$, then $\tilde a_\epsilon(s)=\tilde a_\epsilon(t)=a(\epsilon)$ for $s\leq\epsilon$, and therefore,
\begin{multline*}
|a_\epsilon(t)-\tilde a_\epsilon(t)|\leq \int_{\epsilon}^{t+\epsilon}\rho_\epsilon(t-s)| a(s)- a(\epsilon)|\,ds\\
\leq \int_{\epsilon}^{t+\epsilon}\rho_\epsilon(t-s)\frac C{\nu(\epsilon)}\mu(|s-\epsilon|)\,ds\\
\leq  \int_{\epsilon}^{t+\epsilon}\rho_\epsilon(t-s)\frac C{\nu(t)}\mu(\epsilon)\,ds\leq \frac{C}{\nu(t)}\mu(\epsilon).
\end{multline*}
If $\epsilon\leq t\leq 2\epsilon$, then $0\leq t-\epsilon\leq \epsilon\leq t$. Therefore, we have
\begin{multline*}
|a_\epsilon(t)-\tilde a_\epsilon(t)|\leq \int_{t-\epsilon}^{\epsilon}\rho_\epsilon(t-s)| a(\epsilon)- a(t)|\,ds+\int_{\epsilon}^{t+\epsilon}\rho_\epsilon(t-s)| a(s)- a(t)|\,ds\\
\leq \int_{t-\epsilon}^{\epsilon}\rho_\epsilon(t-s)\frac C{\nu(\epsilon)}\mu(|t-\epsilon|)\,ds+\int_{\epsilon}^{t+\epsilon}\rho_\epsilon(t-s)\frac C{\nu(\epsilon)}\mu(|t-s|)\,ds\\
\leq \frac{C}{\nu(\epsilon)}\mu(\epsilon)\leq  \frac{C}{\nu(t/2)}\mu(\epsilon)\leq \frac{C/\kappa}{\nu(t)}\mu(\epsilon).
\end{multline*}
The thesis follows setting $C':=\max\{C, C/\kappa, 2\|a\|_\infty\}$.

In order to estimate $a'_\epsilon$, we observe that 
\begin{multline*}
|a'_\epsilon(t)|=\left|\int_{t-\epsilon}^{t+\epsilon}\rho'_\epsilon(t-s)\tilde a_\epsilon(s)\,ds\right|=\left|\int_{t-\epsilon}^{t+\epsilon}\rho'_\epsilon(t-s)(\tilde a_\epsilon(s)-\tilde a_\epsilon(t))\,ds\right|\\
\leq \int_{t-\epsilon}^{t+\epsilon}|\rho'_\epsilon(t-s)||\tilde a_\epsilon(s)-\tilde a_\epsilon(t)|\,ds.
\end{multline*}
Then we procede as above, noticing that $\rho'_\epsilon(t)=\frac1{\epsilon^2}\rho'(\frac t\epsilon)$, and hence  $$\int_{t-\epsilon}^{t+\epsilon}|\rho'_\epsilon(t-s)|\,ds=\frac{\|\rho'\|_{L^1}}\epsilon.$$
The thesis follows setting $C'':=\|\rho'\|_{L^1}\max\{C, C/\kappa, \|a\|_\infty\}$.
\end{proof}

\section{Well posedness in Gevrey spaces}
In this section we shall prove that if the coefficients $a_{ij}$'s are locally H\"older continuous of exponent $\alpha$, with a H\"older constant which grows like $t^{-p}$ as $t\to0$, then the Cauchy problem (\ref{eq1}), (\ref{eq2}) is well posed in a suitable Gevrey space $\gamma^{(\sigma)}$, where $\sigma$ depends on $\alpha$ and $p$.

As we pointed out in the Introduction, since the coefficients $a_{ij}$'s
are real integrable functions, the Cauchy problem (\ref{eq1}), (\ref{eq2})
is well posed in
${\mathcal A}'(\R^n)$, the space of real analytic functionals (which have by definition compact support). Moreover, if
the
initial data vanish in a ball, then the solution vanishes in a cone, whose basis is the same ball and whose slope depends on the coefficients $a_{ij}$'s.  Therefore, it will be sufficient to
show that if $u_0$ and
$u_1$ belong to a a suitable Gevrey space $\gamma^{(\sigma)}$ and have compact support, then the corresponding solution
$u$ is not only in $W^{2,1}([0,T],{\mathcal A}'(\R^n))$, but it belongs
to the same Gevrey space in the $x$ variable for all $t\in[0,T$]. The result for initial data which do not have compact support follows by an exhaustion argument. Our main tools in the proof
will be the Paley-Wiener theorem and energy estimates.

\begin{thm}\label{th1}
Let $p> 1$ and $0<\alpha<1$, and assume that there exists a constant $C>0$ such that the function $a=a(t,\xi)$ defined by (\ref{eq3}) satisfies 
\begin{equation}|a(t+\tau,\xi)-a(t,\xi)|\leq \frac C{t^p}\tau^\alpha,\quad 0\leq\tau,\quad t,t+\tau\in\,]0,T]\label{control1}\end{equation}
for all $\xi\in \Bbb R^n\setminus\{0\}$.Then the Cauchy problem
(\ref{eq1}), (\ref{eq2}) is $\gamma^{(\sigma)}$-well-posed for
$1\leq\sigma<
\frac{p}{p-\alpha}$. 
\end{thm}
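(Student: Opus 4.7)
The plan is to apply the classical Fourier-side energy method of Colombini--De Giorgi--Spagnolo, using Proposition \ref{prop1} as the regularization tool adapted to the singular Lipschitz control \eqref{control1}. By finite propagation speed for analytic functionals and an exhaustion argument, it suffices to treat compactly supported initial data $u_0,u_1\in\gamma^{(\sigma)}$; Paley--Wiener then yields $|\hat u_j(\xi)|\le C\exp(-\delta|\xi|^{1/\sigma})$ for some $\delta>0$, $j=0,1$. The partial Fourier transform of \eqref{eq1} in $x$ produces the family of ordinary differential equations
$$\hat u_{tt}(t,\xi)+a(t,\xi)|\xi|^2\hat u(t,\xi)=0,\qquad \xi\in\R^n\setminus\{0\},$$
and membership $u(t,\cdot)\in\gamma^{(\sigma)}$ will follow from an upper bound of the form $|\hat u_t(t,\xi)|^2+|\xi|^2|\hat u(t,\xi)|^2\le C\exp(-\delta'|\xi|^{1/\sigma})$ uniform in $t\in[0,T]$.

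Fix $\xi$ with $|\xi|$ large and apply Proposition \ref{prop1} to $a(\cdot,\xi)$ with $\nu(t)=t^p$ (which satisfies \eqref{doubling} with $\kappa=2^{-p}$) and $\mu(\tau)=\tau^\alpha$, at the scale $\epsilon=\epsilon(|\xi|):=|\xi|^{-1}$. I would define the approximated energy
$$E(t,\xi):=|\hat u_t(t,\xi)|^2+a_\epsilon(t,\xi)|\xi|^2|\hat u(t,\xi)|^2,$$
possibly shifting $a_\epsilon$ by a small constant to guarantee uniform ellipticity $a_\epsilon\ge\lambda_0/2$. A direct differentiation, using the ODE to substitute for $\hat u_{tt}$, yields the well-known bound
$$\frac{E'(t,\xi)}{E(t,\xi)}\leq 2\,\frac{|a(t,\xi)-a_\epsilon(t,\xi)|}{\sqrt{a_\epsilon(t,\xi)}}\,|\xi|+\frac{|a_\epsilon'(t,\xi)|}{a_\epsilon(t,\xi)}.$$

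The decisive computation is the elementary integral estimate
$$\int_0^T \min\!\left\{1,\,\frac{\epsilon^\alpha}{t^p}\right\}dt\leq C\,\epsilon^{\alpha/p},$$
valid because $p>1$: the integrand equals $1$ on $[0,\epsilon^{\alpha/p}]$ and decays as $\epsilon^\alpha t^{-p}$ afterwards, both contributions being of the same order. Combined with the bounds of Proposition \ref{prop1} this gives
$$\int_0^T\frac{E'}{E}\,dt\leq C_1\bigl(|\xi|\,\epsilon^{\alpha/p}+\epsilon^{\alpha/p-1}\bigr)=2C_1\,|\xi|^{(p-\alpha)/p}$$
with the choice $\epsilon=|\xi|^{-1}$. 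Gr\"onwall's inequality and the Paley--Wiener decay of the initial energy then produce
$$E(T,\xi)\leq C_2\exp\bigl(-\delta|\xi|^{1/\sigma}+2C_1\,|\xi|^{(p-\alpha)/p}\bigr),$$
and the hypothesis $\sigma<p/(p-\alpha)$, equivalent to $1/\sigma>(p-\alpha)/p$, forces exponential Gevrey-$\sigma$ decay of the right-hand side. Continuous dependence on the data is then immediate from linearity.

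The main obstacle is the balance between the two bad terms in the energy inequality: decreasing $\epsilon$ makes $|a-a_\epsilon|$ small but $|a_\epsilon'|\sim 1/\epsilon$ large, and conversely. The $\min\{1,\cdot\}$ structure of Proposition \ref{prop1} is crucial here to tame the singularity $\nu(t)^{-1}=t^{-p}$ near $t=0$: without the truncation, the integral $\int_0^T \epsilon^\alpha/t^p\,dt$ would diverge at the origin, and it is precisely the condition $p>1$ that makes the truncated version of order $\epsilon^{\alpha/p}$. The symmetric choice $\epsilon=|\xi|^{-1}$ simultaneously minimizes both terms and pins down the optimal Gevrey threshold $p/(p-\alpha)$.
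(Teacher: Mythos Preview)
Your proposal is correct and follows essentially the same route as the paper: Fourier transform, approximated energy built with the regularization $a_\epsilon$ from Proposition~\ref{prop1} (with $\mu(\tau)=\tau^\alpha$, $\nu(t)=t^p$), the splitting of the integral at $t=\epsilon^{\alpha/p}$ to evaluate $\int_0^T\min\{1,\epsilon^\alpha t^{-p}\}\,dt\le C\epsilon^{\alpha/p}$, the choice $\epsilon=|\xi|^{-1}$, and Paley--Wiener to close. The only superfluous detail is the suggested shift of $a_\epsilon$: since $\tilde a_\epsilon$ takes values in $[\lambda_0,\Lambda_0]$ and $a_\epsilon$ is a convex average of such values, the lower bound $a_\epsilon\ge\lambda_0$ holds automatically.
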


\begin{rem}
For a fixed $p>1$, passing to the limit as $\alpha\to1$ we regain the result of \cite{CDSK1}. In the same way, for a fixed $\alpha<1$, passing to the limit as $p\to 1$ we extend to $p=1$ the result of \cite{CDGS} which was valid only for $p<1$. The case $\alpha=1$, $p=1$ was considered in \cite{CDSK1} and will be reconsidered here in a more general context: in this case one has well posedness in $C^\infty$.
\end{rem}

\begin{rem} 

The result in Theorem \ref{th1} can be considered sharp in the following sense. Let $p_0> 1$ and $0<\alpha_0<1$. It is possible to construct a positive function $a\in C^\infty(]0,T])\cap C([0,T])$ such that
$$|a(t+\tau)-a(t)|\leq \frac C{t^{p_0}}\tau^{\alpha_0},\quad 0\leq\tau,\quad t,t+\tau\in\,]0,T],$$
and it is possible to construct two functions $u_0,\ u_1\in\gamma^{(s)}(\Real)$, for all $s>\frac{p_0}{p_0-\alpha_0}$ such that
the Cauchy problem 
$$
\left\{
\begin{array}{ll}
u_{tt}-a(t)u_{xx}=0\\[0.2cm]
u(0,x)=u_0(x),\ u_t(0,x)=u_1(x)
\end{array}\right.
$$
has no solution in $C^1([0,r[\,;\, D'^{(s)})$, for all $s>\frac{p_0}{p_0-\alpha_0}$ and for all $r>0$ (here $D'^{(s)} $ denotes the set of Gevrey-ultradistributions of index $s$). The construction of such a counterexample is exactly the same as that contained in Theorem 5 in \cite{CDSK1}.
 \end{rem}
\begin{rem} A result analogous to that of Theorem \ref{th1} can be proved if the singularity of the $a_{ij}$'s is located at $t=T$, with only minor obvious changes in the proof. As a consequence, the result is still valid if the coefficients have a finite number of singularities, where the loss of regularity is controlled as in (\ref{control1}).
\end{rem}

\begin{proof}[Proof of Theorem \ref{th1}]
We take the Fourier transform of $u$ with respect to $x$, and we denote it by $\hat u$. Equation
(\ref{eq1}) then transforms to
\begin{equation}\label{eq2.1}
\hat u_{tt}(t,\xi)+a(t,\xi)|\xi|^2\hat u(t,\xi)=0.
\end{equation}
Let $\epsilon$ be a positive parameter and for each
$\epsilon$ let $a_\eps\colon[0,T]\times(\R^n\setminus\{0\})\to\R$ be
defined according to (\ref{reg1})-(\ref{reg2}).

We define the {\em approximate energy} of $\hat u$ by
\begin{equation}\label{eq2.2}
E_\eps(t,\xi):=a_\eps(t,\xi)|\xi|^2|\hat u(t,\xi)|^2+|\hat u_t(t,\xi)|^2,\quad (t,\xi)\in
[0,T]\times(\R^n\setminus\{0\}).
\end{equation}
Differentiating $E_\eps$ with respect to $t$ and using (\ref{eq2.1}) we get
\begin{multline*}
E'_\eps(t,\xi)=a'_\eps(t,\xi)|\xi|^2|\hat u(t,\xi)|^2+
2a_\eps(t,\xi)|\xi|^2{\rm Re}(\hat u_t(t,\xi)\bar {\hat u}(t,\xi))\\
+2{\rm Re} (\hat u_{tt}(t,\xi)\bar {\hat u}_t(t,\xi))\\
\leq \left(\frac{|a'_\eps(t,\xi|}{a_\eps(t,\xi)}+
\frac{|a_\eps(t,\xi)-a(t,\xi)|}{a_\eps(t,\xi)^{1/2}}|\xi|\right)E_\eps(t,\xi).
\end{multline*}
By Gronwall's lemma we obtain
\begin{equation}\label{eq2.3}
E_\eps(t,\xi)\leq E_\eps(0,\xi)
\exp\left(\int_0^T\frac{|a'_\eps(t,\xi|}{a_\eps(t,\xi)}\,d t+
|\xi|\int_0^T\frac{|a_\eps(t,\xi)-a(t,\xi)|}{a_\eps(t,\xi)^{1/2}}\,d t\right)
\end{equation}
for all $t\in[0,T]$ and for all $\xi\in\R^n$, $|\xi|\geq1$.

By Proposition \ref{prop1} with $\mu(\tau)=\tau^\alpha$ and $\nu(t)=t^p$ we have 

\begin{multline*}
\int_0^T\frac{|a'_\eps(t,\xi|}{a_\eps(t,\xi)}\,d t+
|\xi|\int_0^T\frac{|a_\eps(t,\xi)-a(t,\xi)|}{a_\eps(t,\xi)^{1/2}}\,d t\\
\leq \int_0^T\frac{|a'_\eps(t,\xi|}{a_\eps(t,\xi)}\,d t+
|\xi|\int_0^T\left(\frac{|a(t,\xi)-\tilde a_\epsilon(t,\xi)|}{a_\eps(t,\xi)^{1/2}}+
\frac{|a_\eps(t,\xi)-\tilde a_\epsilon(t,\xi)|}{a_\eps(t,\xi)^{1/2}}\right)\,d t\\
\leq\int_0^{\epsilon^{\alpha/p}}\frac{C''}{\lambda_0\epsilon}\,dt+\int_{\epsilon^{\alpha/p}}^T\frac{C''}{\lambda_0\epsilon}
t^{-p}\epsilon^\alpha\,dt+\frac{2\Lambda_0}{\lambda_0^{1/2}}|\xi|\epsilon \\+ |\xi|\left(\int_0^{\epsilon^{\alpha/p}}\frac{C'}{\lambda_0^{1/2}}\,dt+\int_{\epsilon^{\alpha/p}}^T\frac{C'}{\lambda_0^{1/2}}
t^{-p}\epsilon^\alpha\,dt\right)\\
\leq M|\xi|\epsilon+ M\left(|\xi|+\frac1\epsilon\right)\left(\epsilon^{\alpha/p}+(\epsilon^{\alpha/p})^{1-p}\epsilon^\alpha\right)\\= M|\xi|\epsilon+ 2M\left(|\xi|+\frac1\epsilon\right)\epsilon^{\alpha/p},
\end{multline*}
where $M$ depends on $C'$, $C''$, $\lambda_0$, $\Lambda_0$, $\alpha$ and $p$.
Choosing $\epsilon=|\xi|^{-1}$ we get
\begin{equation}\label{mart1}
\left[\int_0^T\frac{|a'_\eps(t,\xi|}{a_\eps(t,\xi)}\,d t+
|\xi|\int_0^T\frac{|a_\eps(t,\xi)-a(t,\xi)|}{a_\eps(t,\xi)^{1/2}}\,d t\right]_{\epsilon=|\xi|^{-1}}\leq M+4M |\xi|^{\frac{p-\alpha}{p}}.
\end{equation}
Putting together (\ref{eq2.3}) and (\ref{mart1}) we get
\begin{equation}
E_{1/|\xi|}(t,\xi)\leq e^Me^{4M|\xi|^{\frac{p-\alpha}{p}}}E_{1/|\xi|}(0,\xi)\end{equation}
and, finally,
\begin{equation}
|\hat u_t(t,\xi)|^2+|\xi|^2|\hat u(t,\xi)|^2\leq \frac{e^M\Lambda_o}{\lambda_0}e^{4M|\xi|^{\frac{p-\alpha}{p}}}\left(|\hat u_t(0,\xi)|^2+|\xi|^2|\hat u(0,\xi)|^2\right)\label{mart2}
\end{equation}
Now if $u_0$, $u_1\in\gamma^{(\sigma)}\cap C^\infty_0$, the Paley-Wiener
theorem  ensures that there exist $K,\delta>0$ such that
\begin{equation}\label{eq2.4}
|\hat u(0,\xi)|^2+|\hat u_t(0,\xi)|^2\leq K\exp(-\delta|\xi|^{1/\sigma})
\end{equation}
for all $\xi\in\R^n$, $|\xi|\geq1$. It follows from (\ref{mart2}) that if $\sigma<p/(p-\alpha)$,
then there exist
$K',\delta'>0$ such that
\begin{equation}\label{eq2.5}
|\hat u(t,\xi)|^2+|\hat u_t(t,\xi)|^2\leq K'\exp(-\delta'|\xi|^{1/ \sigma})
\end{equation}
for all $t\in[0,T]$ and for all $\xi\in\R^n$, $|\xi|\geq1$ and, therefore, $u\in W^{2,1}([0,T],\gamma^{(\sigma)})$. The proof is complete.
\end{proof}

\section{Well posedness in $C^\infty$}
Let $\psi\colon [1,+\infty[\to\,]0,+\infty[$ be a strictly increasing continuous function, such that $\psi'$ is non-increasing and $e^r\psi'(r)$ is non-decreasing. Moreover, we assume that 
\begin{enumerate}
\item $\lim_{r\to\infty}\psi(r)=\chi$, $0<\chi\leq+\infty$;
\item $\lim_{r\to\infty}\psi'(r)=\eta$, $0\leq\eta<+\infty$; 
\end{enumerate}
We set
\begin{equation}
\nu(t):=\begin{cases}\frac{t}{\psi'(|\log t|)}&\text{for $0<t\leq e^{-1}$},\\\null&\null\\ \frac{e^{-1}}{\psi'(1)} &\text{for $e^{-1}\leq t$}.\end{cases}\label{blow}
\end{equation}
A direct computation shows that  $\nu$ is a non-decreasing continuous function and that
$\nu(t/2)\geq (1/2) \nu(t)$ for $t\in\,]0,T]$.
We define
\begin{equation}
\mu(\tau):=\frac{\tau|\log \tau|}{\psi(|\log \tau|)}\label{mod}
\end{equation}
and we assume that $\mu$ is strictly increasing and concave in $]0,\tau_0]$ for a suitable $\tau_0>0$, so it is a modulus of continuity.

\begin{thm}\label{th2}
Let $\nu=\nu(t)$ and $\mu=\mu(\tau)$ be as above, and assume that there exists a constant $C>0$ such that the function $a=a(t,\xi)$ defined by (\ref{eq3}) satisfies 
\begin{equation}|a(t+\tau,\xi)-a(t,\xi)|\leq \frac{C}{\nu(t)}\mu(\tau),\quad 0\leq\tau\leq\tau_0, \quad t,t+\tau\in\,]0,T],\label{control2}\end{equation}
for all $ \xi\in \Bbb R^n\setminus\{0\}$. Then the Cauchy problem
(\ref{eq1}), (\ref{eq2}) is well-posed in $C^\infty$.
\end{thm}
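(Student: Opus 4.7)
My plan is to mimic the proof of Theorem \ref{th1} very closely, with the $C^\infty$ refinement amounting to a sharper choice of the regularization scale. After Fourier transforming \eqref{eq1} in $x$ I obtain $\hat u_{tt} + a(t,\xi)|\xi|^2 \hat u = 0$. For each $\xi$ with $|\xi|\geq 1$ I regularize $a(\cdot,\xi)$ at a scale $\epsilon = \epsilon(|\xi|)\in\,]0,\tau_0]$ yet to be chosen, via Proposition \ref{prop1}, and form the approximate energy
\begin{equation*}
E_\epsilon(t,\xi):= a_\epsilon(t,\xi)|\xi|^2|\hat u(t,\xi)|^2 + |\hat u_t(t,\xi)|^2.
\end{equation*}
The same differentiation and Gronwall argument as in Theorem \ref{th1} gives
\begin{equation*}
E_\epsilon(t,\xi)\leq E_\epsilon(0,\xi)\exp\Bigl(\int_0^T \tfrac{|a_\epsilon'(t,\xi)|}{a_\epsilon(t,\xi)}\,dt + |\xi|\int_0^T \tfrac{|a_\epsilon(t,\xi)-a(t,\xi)|}{a_\epsilon(t,\xi)^{1/2}}\,dt\Bigr).
\end{equation*}
For $C^\infty$ well-posedness it suffices to show that the argument of the exponential is dominated by $N\log|\xi|$ for some $N$ independent of $\xi$; this yields polynomial loss of derivatives, and by Paley--Wiener (used exactly as at the end of Theorem \ref{th1}) transfers $C^\infty$ data to $C^\infty$ solutions.

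The central computation is to use Proposition \ref{prop1} with the present $\mu$ and $\nu$. Both $|a_\epsilon'|$ and $|a_\epsilon - \tilde a_\epsilon|$ are controlled by $\min\{1,\mu(\epsilon)/\nu(t)\}$ (scaled by $1/\epsilon$ in the former case), while $|\tilde a_\epsilon - a|$ is supported on $[0,\epsilon]$ and uniformly bounded. Defining $t_\epsilon$ by $\nu(t_\epsilon)=\mu(\epsilon)$ and splitting each integral at $t_\epsilon$ converts the bounds into
\begin{equation*}
\int_0^T \tfrac{|a_\epsilon'|}{a_\epsilon}\,dt \lesssim \tfrac{t_\epsilon + \mu(\epsilon)J_\epsilon}{\epsilon},\qquad |\xi|\int_0^T \tfrac{|a_\epsilon - a|}{a_\epsilon^{1/2}}\,dt \lesssim |\xi|\bigl(\epsilon + t_\epsilon + \mu(\epsilon)J_\epsilon\bigr),
\end{equation*}
where $J_\epsilon := \int_{t_\epsilon}^T dt/\nu(t)$. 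The change of variables $s=|\log t|$ combined with the explicit form $\nu(t)=t/\psi'(|\log t|)$ identifies $J_\epsilon = \psi(|\log t_\epsilon|)-\psi(|\log T|)\leq \psi(|\log t_\epsilon|)$, so the upper bound on the exponent reduces to a function of the two scalars $|\log\epsilon|$ and $|\log t_\epsilon|$.

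The heart of the argument is then to exploit both structural assumptions on $\psi$ (the monotonicity of $\psi'$ and of $e^r\psi'(r)$) to show that $|\log t_\epsilon|$ is comparable to $|\log\epsilon|$ and, more importantly, that the combination $\mu(\epsilon)\psi(|\log t_\epsilon|)$ does not exceed a constant multiple of $\epsilon|\log\epsilon|$. Granting this, and using $t_\epsilon\leq \epsilon\cdot (\text{harmless factor})$ of logarithmic order, choosing $\epsilon = 1/|\xi|$ makes $|\xi|\epsilon=1$, $|\log\epsilon|=\log|\xi|$, and the four terms in the two estimates above all collapse to $O(\log|\xi|)$. Substitution into the energy inequality gives
\begin{equation*}
|\hat u_t(t,\xi)|^2 + |\xi|^2|\hat u(t,\xi)|^2 \leq C|\xi|^{N}\bigl(|\hat u_t(0,\xi)|^2 + |\xi|^2|\hat u(0,\xi)|^2\bigr)
\end{equation*}
for a fixed $N$, yielding the desired $C^\infty$ well-posedness. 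The step I expect to be technically the most delicate is the quantitative comparison between $|\log t_\epsilon|$ and $|\log\epsilon|$ forced by the relation $e^{-|\log t_\epsilon|}/\psi'(|\log t_\epsilon|) = e^{-|\log\epsilon|}|\log\epsilon|/\psi(|\log\epsilon|)$, since that is precisely where the compatibility between $\mu$ and $\nu$ encoded in $\psi$ is used — the concavity of $\mu$ and the doubling-type property of $\nu$ already verified after \eqref{blow} are the tools that make this comparison quantitative.
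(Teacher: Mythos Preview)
Your overall strategy coincides with the paper's: Fourier transform, approximate energy, Proposition \ref{prop1}, Gronwall, and the choice $\epsilon=|\xi|^{-1}$ leading to an $O(\log|\xi|)$ exponent and hence a finite loss of derivatives. The one substantive divergence is in how you estimate the integrals, and there your route is harder than necessary.

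You split at the crossover point $t_\epsilon$ defined by $\nu(t_\epsilon)=\mu(\epsilon)$, obtain $J_\epsilon\le\psi(|\log t_\epsilon|)$, and are then left with the ``delicate'' task of comparing $|\log t_\epsilon|$ with $|\log\epsilon|$ in order to bound $\mu(\epsilon)\psi(|\log t_\epsilon|)$ by $C\epsilon|\log\epsilon|$. The paper avoids this comparison entirely by splitting at $t=\epsilon$ instead of at $t_\epsilon$. On $[0,\epsilon]$ one uses the trivial bound $1$ in the $\min$; on $[\epsilon,e^{-1}]$ one uses $\mu(\epsilon)/\nu(t)$ and the identity
\[
\int_\epsilon^{e^{-1}}\frac{dt}{\nu(t)}=\int_\epsilon^{e^{-1}}\frac{\psi'(|\log t|)}{t}\,dt=\psi(|\log\epsilon|)-\psi(1),
\]
since $\psi'(|\log t|)/t=-\frac{d}{dt}\psi(|\log t|)$. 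Because $\mu(\epsilon)=\epsilon|\log\epsilon|/\psi(|\log\epsilon|)$ by definition, the factors $\psi(|\log\epsilon|)$ cancel exactly and one gets
\[
\mu(\epsilon)\int_\epsilon^{e^{-1}}\frac{dt}{\nu(t)}\le \epsilon|\log\epsilon|
\]
in one line. The remaining piece on $[e^{-1},T]$ is harmless since $\nu$ is constant there and $\psi(|\log\epsilon|)\ge\psi(|\log\tau_1|)$. Thus both integrals are bounded by $M(1+|\log\epsilon|)$ without ever introducing $t_\epsilon$ or invoking the monotonicity of $e^r\psi'(r)$ (that hypothesis is used only to guarantee that $\nu$ is non-decreasing, not in the estimate itself). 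Your path can likely be completed, but the deferred comparison you flag is precisely the step the paper's splitting makes unnecessary.
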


\begin{rem}Examples of functions satisfying all the above properties are $\psi(r)=1-e^{-\alpha r}$ with $0<\alpha\leq1$, $\psi(r)=1+\log r$ and $\psi(r)=r^\beta$ with $0<\beta\leq1$.
In particular, we have:
\begin{itemize} 
\item if $\psi(r)=r$ we have $\eta=1$ and $\chi=+\infty$ and we get $\mu(\tau)=\tau$ and $\nu(t)=t$, that is the situation considered in \cite{CDSK1};
\item if $\psi(r)=1-e^{-\alpha r}$ we have $\eta=0$ and $\chi=1$ and we get $\mu(\tau)=\tau|\log\tau|/(1-\tau^\alpha)$, which is equivalent to $\mu(\tau)=\tau|\log\tau|$, and $\nu(t)=\alpha t^{1-\alpha}$, that is a situation covered by the result of \cite{CDGS}, since $\nu(t)^{-1}$ is integrable;
\item if $\psi(r)=1+\log r$ or $\psi(r)=r^\beta$ with $0<\beta<1$, we have $\eta=0$ and $\chi=+\infty$, and we get $\mu(\tau)=\tau|\log\tau|/(1+\log|\log\tau|)$ or $\mu(\tau)=\tau|\log\tau|^{1-\beta}$. In both cases $\mu$-continuity is weaker than Lipschitz continuity. Moreover we have $\nu(t)=t|\log t|$ or $\nu(t)=t|\log t|^{1-\beta}$, so in both cases $\nu(t)^{-1}$ is not integrable.
\end{itemize}
The case in which $\mu(\tau)=\tau|\log\tau|$ and $\nu(t)^{-1}$ is not integrable is not covered by Theorem \ref{th2}, and we were not able to find a counterexample to $C^\infty$ well posedness either, so the question remains open. On the other hand, when $\mu(\tau)=\tau|\log\tau|$ and $\nu(t)=t$ by Theorem \ref{th1} we get authomatically $\gamma^{(\infty)}$ well posedness.
\end{rem} 
\begin{rem} A result analogous to that of Theorem \ref{th2} can be proved if the singularity of the $a_{ij}$'s is located at $t=T$, with only minor obvious changes in the proof. As a consequence, the result is still valid if the coefficients have a finite number of singularities, where the loss of regularity is controlled as in (\ref{control2}).
\end{rem}

\begin{proof}[Proof of Theorem \ref{th2}]
Like in the proof of Theorem \ref{th1}, we take the Fourier transform $\hat u$ of $u$. Equation
(\ref{eq1}) then transforms to
\begin{equation}\label{meq1}
\hat u_{tt}(t,\xi)+a(t,\xi)|\xi|^2\hat u(t,\xi)=0.
\end{equation}
For $0<\epsilon\leq\tau_1:=\min\{\tau_0, T,e^{-1}\}$ we define $a_\eps\colon[0,T]\times(\R^n\setminus\{0\})\to\R$ according to (\ref{reg1})-(\ref{reg2}). Again, we define an {\em approximate energy} of $\hat u$ by
\begin{equation}\label{meq2}
E_\eps(t,\xi):=a_\eps(t,\xi)|\xi|^2|\hat u(t,\xi)|^2+|\hat u_{t}(t,\xi)|^2,\quad (t,\xi)\in
[0,T]\times(\R^n\setminus\{0\}).
\end{equation}
Differentiating $E_\eps$ with respect to $t$ and using (\ref{meq1}) we get
\begin{multline*}
E'_\eps(t,\xi)=a'_\eps(t,\xi)|\xi|^2|\hat u(t,\xi)|^2+
2a_\eps(t,\xi)|\xi|^2{\rm Re}(\hat u_{t}(t,\xi)\bar {\hat u}(t,\xi))\\
+2{\rm Re} (\hat u_{tt}(t,\xi)\bar {\hat u}_t(t,\xi))\\
\leq \left(\frac{|a'_\eps(t,\xi|}{a_\eps(t,\xi)}+
\frac{|a_\eps(t,\xi)-a(t,\xi)|}{a_\eps(t,\xi)^{1/2}}|\xi|\right)E_\eps(t,\xi).
\end{multline*}
By Gronwall's lemma we obtain
\begin{equation}\label{meq3}
E_\eps(t,\xi)\leq E_\eps(0,\xi)
\exp\left(\int_0^T\frac{|a'_\eps(t,\xi|}{a_\eps(t,\xi)}\,d t+
|\xi|\int_0^T\frac{|a_\eps(t,\xi)-a(t,\xi)|}{a_\eps(t,\xi)^{1/2}}\,d t\right)
\end{equation}
for all $t\in[0,T]$ and for all $\xi\in\R^n$, $|\xi|\geq1$.
By Proposition \ref{prop1} with $\mu(\tau)$ and $\nu(t)$ given by (\ref{mod}) and (\ref{blow}), we have 
\begin{multline*}
\int_0^T\frac{|a'_\eps(t,\xi|}{a_\eps(t,\xi)}\,d t=\int_0^\epsilon\frac{|a'_\eps(t,\xi|}{a_\eps(t,\xi)}\,d t+\int_\epsilon^{e^{-1}}\frac{|a'_\eps(t,\xi|}{a_\eps(t,\xi)}\,d t+\int_{e^{-1}}^T\frac{|a'_\eps(t,\xi|}{a_\eps(t,\xi)}\,d t\\
\leq\frac{C''}{\lambda_0\epsilon}\left(\epsilon+\int_{\epsilon}^{e^{-1}}
\frac{\psi'(|\log t|)}{t}\frac{\epsilon|\log \epsilon|}{\psi(|\log \epsilon|)}\,dt+\int_{e^{-1}}^{T}\frac{\psi'(1)}{e^{-1}}\frac{\epsilon|\log \epsilon|}{\psi(|\log \epsilon|)}\,dt\right).\\
\end{multline*}
Since $$\frac{\psi'(|\log t|)}{t}=-\frac{d}{dt}\psi(|\log t|)$$ and $\psi(|\log\epsilon|)\geq\psi(|\log\tau_1|)$, we obtain
\begin{equation}
\int_0^T\frac{|a'_\eps(t,\xi|}{a_\eps(t,\xi)}\,d t\leq M''\left(1+|\log\epsilon|\right).
\end{equation}
On the other hand
\begin{multline*}
\int_0^T\frac{|a_\eps(t,\xi)-a(t,\xi)|}{a_\eps(t,\xi)^{1/2}}\,d t
=\int_0^T\left(\frac{|a(t,\xi)-\tilde a_\epsilon(t,\xi)|}{a_\eps(t,\xi)^{1/2}}+
\frac{|a_\eps(t,\xi)-\tilde a_\epsilon(t,\xi)|}{a_\eps(t,\xi)^{1/2}}\right)\,d t\\
\leq\frac{2\Lambda_0}{\lambda_0^{1/2}}\epsilon + \frac{C'}{\lambda_0^{1/2}}\left(\epsilon+\int_{\epsilon}^{e^{-1}}
\frac{\psi'(|\log t|)}{t}\frac{\epsilon|\log \epsilon|}{\psi(|\log \epsilon|)}\,dt+\int_{e^{-1}}^{T}\frac{\psi'(1)}{e^{-1}}\frac{\epsilon|\log \epsilon|}{\psi(|\log \epsilon|)}\,dt\right).
\end{multline*}
Arguing as above we, get
\begin{equation}\int_0^T\frac{|a_\eps(t,\xi)-a(t,\xi)|}{a_\eps(t,\xi)^{1/2}}\,d t\leq M'\epsilon\left(1+|\log\epsilon|\right).\end{equation}
Choosing $\epsilon=|\xi|^{-1}$ we get 
\begin{equation}\label{meq4}
\left[\int_0^T\frac{|a'_\eps(t,\xi|}{a_\eps(t,\xi)}\,d t+
|\xi|\int_0^T\frac{|a_\eps(t,\xi)-a(t,\xi)|}{a_\eps(t,\xi)^{1/2}}\,d t\right]_{\epsilon=|\xi|^{-1}}\leq M(\left(1+\log|\xi|\right)
\end{equation}
for $|\xi|\geq\tau_1^{-1}$.

Putting together (\ref{meq3}) and (\ref{meq4}) we get
\begin{equation}
E_{1/|\xi|}(t,\xi)\leq e^M |\xi|^M E_{1/|\xi|}(0,\xi)\end{equation}
and, finally,
\begin{equation}
|\hat u_{t}(t,\xi)|^2+|\xi|^2|\hat u(t,\xi)|^2\leq \frac{e^M\Lambda_o}{\lambda_0}|\xi|^M\left(|\hat u_{t}(0,\xi)|^2+|\xi|^2|\hat u(0,\xi)|^2\right).\label{meq5}
\end{equation}
Now if $u_0$, $u_1\in C^\infty_0$, the Paley-Wiener
theorem ensures that for all $\zeta>0$ there exists $K_\zeta>0$ such that
\begin{equation}\label{meq6}
|\hat u(0,\xi)|^2+|\hat u_{t}(0,\xi)|^2\leq K_\zeta |\xi|^{-\zeta}
\end{equation}
for all $\xi\in\R^n$, $|\xi|\geq\tau_1^{-1}$. It follows from (\ref{meq5}) that for all $\theta>0$
there exist
$K'_\theta>0$ such that
\begin{equation}\label{meq7}
|\hat u(t,\xi)|^2+|\hat u_{t}(t,\xi)|^2\leq K'_\theta |\xi|^{-\theta}
\end{equation}
for all $t\in[0,T]$ and for all $\xi\in\R^n$, $|\xi|\geq\tau_1^{-1}$, and therefore, $u\in W^{2,1}([0,T],C^\infty_0)$. The proof is complete. 

\end{proof}


\begin{thebibliography}{10}

\bibitem{CDGS} F. Colombini, E. De Giorgi and S. Spagnolo, {\it Sur les
\'equations
hyperboliques avec des coefficients qui ne d\'ependent que du temp}, Ann.
Sc. Norm. Sup. Pisa (4) 6 (1979), no. 3, 511-559.

\bibitem{CDSK1} F. Colombini, D. Del Santo and T. Kinoshita, {\it
Well-posedness
of the Cauchy problem for a hyperbolic equation with non-Lipschitz
coefficients}, Ann.
Sc. Norm. Sup. Pisa (5) 1 (2002), no. 2, 327-358.

\bibitem{CDSR} F. Colombini, D. Del Santo and M. Reissig, {\it On the optimal regularity of coefficients
in hyperbolic Cauchy problems}, Bull. Sci. Math. 127 (2003), no. 4, 328-347.




\bibitem{CL} F. Colombini and N. Lerner, {\it Hyperbolic operators with
non-Lipschitz coefficients}, Duke Math. J., 77 (1995), no. 3, 657-698.

\bibitem{KR} T. Kinoshita and M. Reissig, 
{\it About the loss of derivatives for strictly hyperbolic equations with non-Lipschitz coefficients},
Adv. Differential Equations 10 (2005), no. 2, 191-222. 

\end{thebibliography}
\end{document}